\def\({\left(}
\def\){\right)}
\def\be {\begin{equation}}
\def\en{\end{equation}}
\let\nc\newcommand
\nc{\Ref}[1]{{\rm(\ref{#1})}}
\nc{\BB}{{A_{p,q}}} \nc{\bean}{\begin{eqnarray}} \nc{\eean}{\end{eqnarray}}
\nc{\bea}{\begin{eqnarray*}} \nc{\eea}{\end{eqnarray*}}
\def\Bz{\mathbb Z}
\def\Bn{\mathbb N}
\theoremstyle{plain}
\newtheorem{thm}{Theorem}[section]   
\newtheorem*{thm*}{Theorem}          
\newtheorem*{cor*}{Corollary}        
\newtheorem{lem}[thm]{Lemma}         
\theoremstyle{definition}
\newtheorem*{rem*}{Remark}
\newtheorem*{ack*}{Acknowledgment}
\theoremstyle{remark}
\newtheorem{rem}[thm]{Remark}        %
\theoremstyle{definition}
\numberwithin{equation}{section}
\begin{document}

\title[Delannoy adic]
{an adic dynamical system related to the Delannoy numbers}

\dedicatory{In memory of Dan Rudolph---friend, colleague, and inspiration.}

\author{Karl Petersen}
\address{Department of Mathematics, University of North Carolina
at Chapel Hill\\ Chapel Hill, NC 27599-3250, USA} \email{petersen@math.unc.edu}

\date{\today}

\keywords{Adic transformation, invariant measure, ergodicity, Delannoy numbers} \subjclass{Primary:
37A05, 37A25, 05A10, 05A15; Secondary: 37A50, 37A55}

\begin{abstract}
We introduce an adic (Bratteli-Vershik) dynamical system based on a diagram whose path counts from the root are the Delannoy numbers. We identify the ergodic invariant measures, prove total ergodicity for each of them, and initiate the study of the dimension group and other dynamical properties.
\end{abstract}

\maketitle

\section{Introduction}\label{sec:intro}

Adic, or Bratteli-Vershik, dynamical systems present the cutting and stacking constructions of ergodic theory in a form that is convenient for studying important relations such as orbit equivalence; they also draw connections with the theory of $C^*$ algebras and group representations and raise interesting questions in combinatorics and number theory. In recent years some progress has been made on the dynamics of several natural non-stationary and non-simple adic systems, such as the Pascal, Euler, reverse Euler, and Stirling systems. See \cite{FP2} for a discussion of how such systems arise from walks and reinforced walks on graphs and \cites{PetersenSchmidt1997, AdamsPetersen1998, MelaPetersen2005, Mela2006, Frick2009, delaRueJanvresse2004,BKPS2006, FP, PetersenVarchenko2010} and the references that these papers contain for background on these and other adic systems. In this paper, by investigating the dynamical properties of a particular example, which we call the {\em Delannoy adic}, we initiate study of a class of adic systems which, though non-stationary, still are highly regular in that each vertex has the same pattern of exiting edges. We begin with a brief discussion of how we came across this particular diagram.

In a music theory seminar at IRCAM in Paris, Norman Carey and David Clampitt discussed a diagram attributed to Nicomachus, and later studied by Nicole Oresme, which was supposed to have wonderful properties, including forming a basis for the generation of various kinds of musical scales \cite{CareyClampitt1996}. The diagram, with entries $2^n 3^k$ at the points $(n,k)$ of the integer lattice in the plane, is shown in Figure \ref{fig:Nichomachus}.

\begin{figure}[!h]
  \scalebox{.60}{\includegraphics{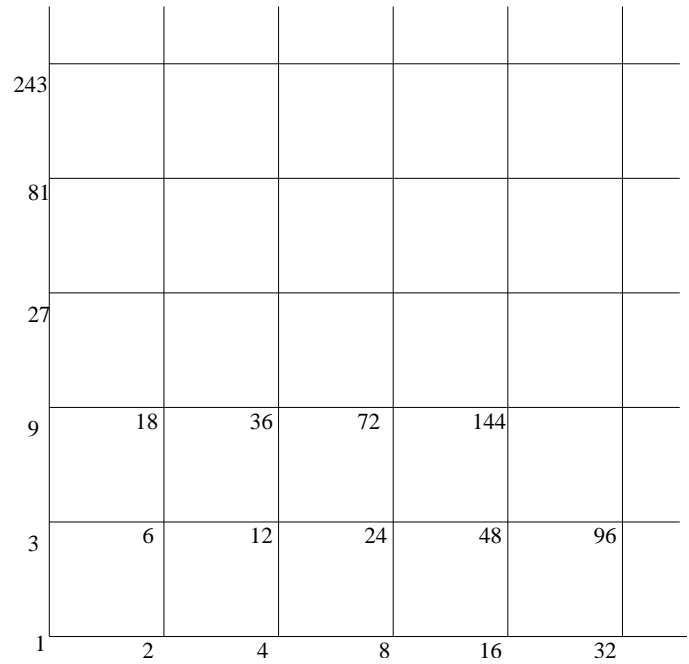}}
  \caption{The first part of the Nicomachus diagram.
  }
  \label{fig:Nichomachus}
  \end{figure}

  Naturally one may ask whether this might turn into a Bratteli diagram in such a way that the entries $2^a 3^b$ would be the dimensions of vertices, that is, the numbers of paths from the root (lower left corner) to the vertices. Adding diagonals to the boxes makes this happen, as seen in Figure \ref{fig:NichomachusBrat}.

  \begin{figure}[!h]
  \scalebox{.60}{\includegraphics{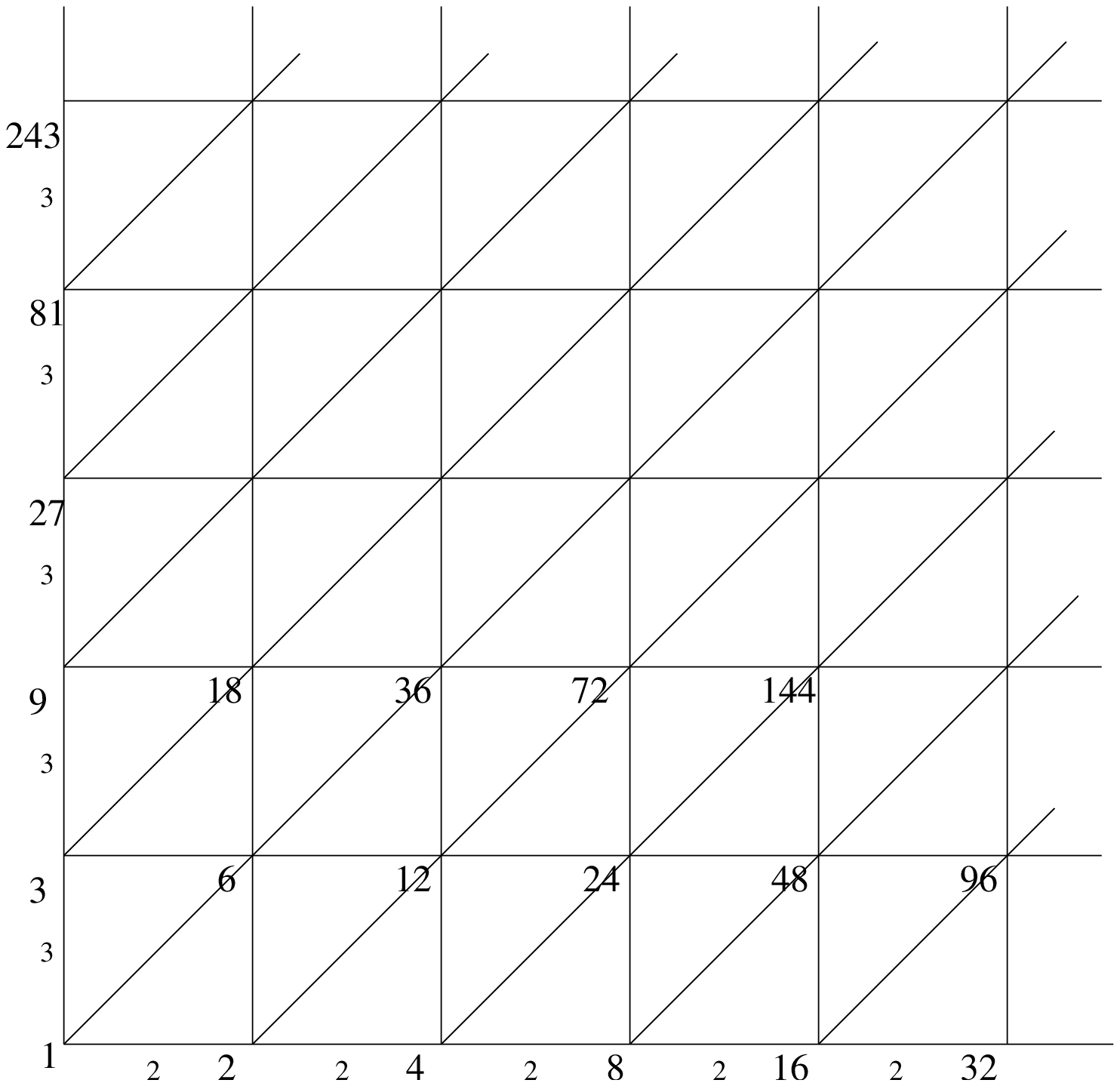}}
  \caption{The Nicomachus diagram with added diagonals.
  Numbers next to edges indicate multiplicities greater than 1.}
  \label{fig:NichomachusBrat}
  \end{figure}

   As we will see below, this does not produce an especially interesting adic dynamical system, since the only ergodic invariant measures are supported on the two boundary odometers. Reducing the edge multiplicities to 1 produces a figure whose path counts are the {\em Delannoy numbers} \cite{BanderierSchwer2005} $D(n,k)$, for $n,k\geq 0$; see Figure \ref{fig:Del1}.

\begin{figure}[!h]
  \scalebox{.60}{\includegraphics{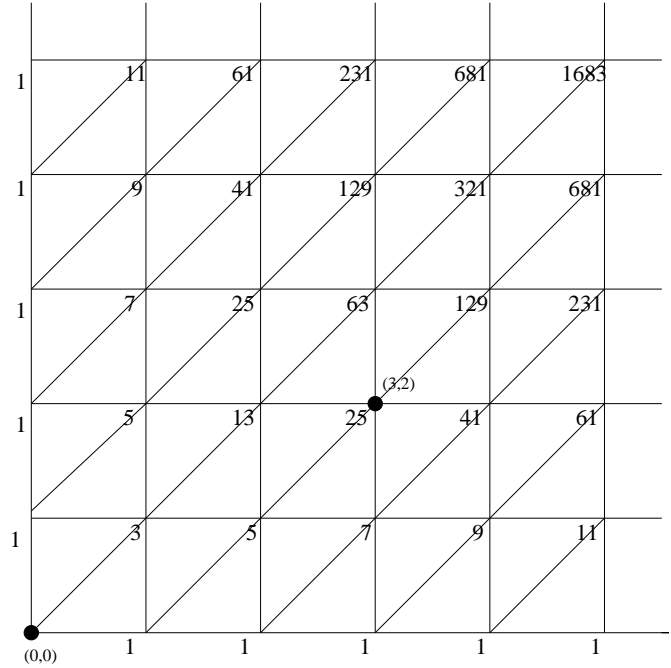}}
  \caption{The Delannoy graph.}
  \label{fig:Del1}
  \end{figure}

  Notice, however, that this is not a legitimate Bratteli diagram, since it is not true that there are connections only from one level (the set of vertices a fixed graph distance from $(0,0)$) to the next; see Figure \ref{fig:delDiag}.

\begin{figure}[!h]
  \scalebox{.60}{\includegraphics{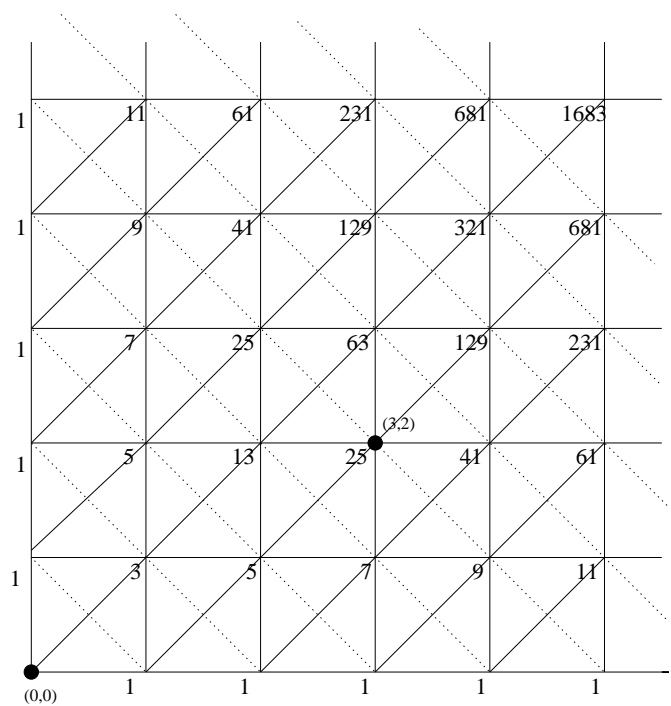}}
  \caption{Levels in the Delannoy graph.
  }
  \label{fig:delDiag}
  \end{figure}

  This can be remedied by inserting ``extra'' vertices at the centers of the squares. While this complicates the graph, the space of infinite paths beginning at the root is the same as before, and so we may regard this as a Bratteli diagram and adic dynamical system that realizes the Delannoy numbers as the dimensions of vertices; see Figure \ref{fig:DelBrat}.

  \begin{figure}[!h]
  \scalebox{.60}{\includegraphics{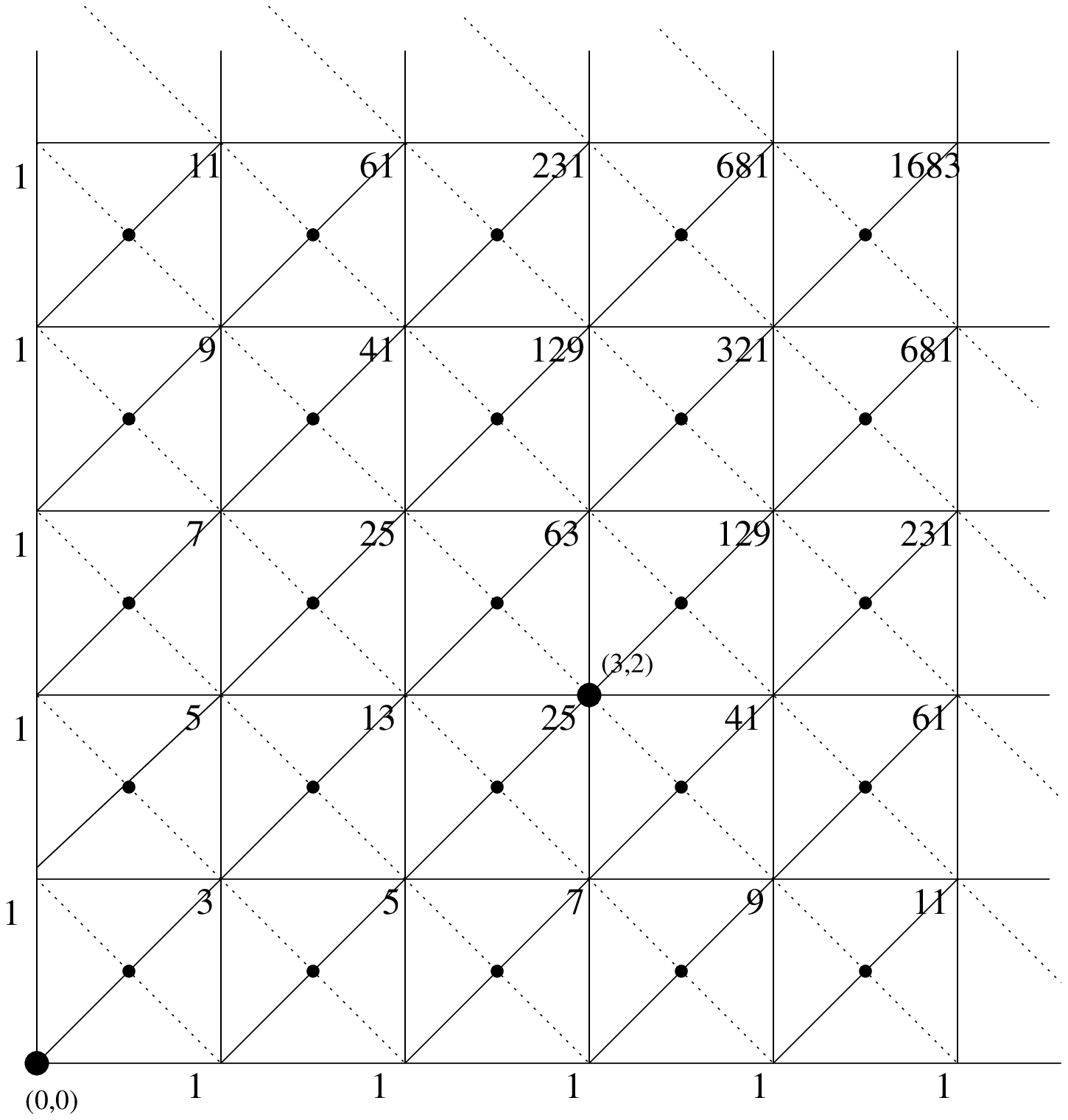}}
  \caption{The Delannoy graph made into a Bratteli diagram.
  }
  \label{fig:DelBrat}
  \end{figure}

  Here are some known key formulas involving the Delannoy numbers (see, for example, \cites{Sloane2010, SloanePlouffe1995}).
  \be
   \begin{gathered}
   D(n,0)=D(0,n)=1 \text{ for all } n\geq 0;\\
   D(n,k)=0 \text{ if either } n \text{ or } k < 0;\\
   D(n,k)=D(n,k-1)+D(n-1,k-1)+D(n-1,k) \text{ for all } n,k.
   \end{gathered}
   \en

   \be
   \sum_{n,k \geq 0} D(n,k) x^n y^k = \frac{1}{1-(x+y+xy)} .
   \en

  Assuming $n \geq k$,
   \be \label{eq:DelFormulas}
   \begin{gathered}
   D(n,k)= \sum_{d=0}^{k} \binom{k}{d}  \binom{n+k-d}{k} = \sum_{d=0}^{k} 2^d \binom{n}{d} \binom{k}{d}\\
  = \sum_{d=0}^k \binom{k}{d} \binom{n+d}{k} = \sum_{d=0}^k \binom{k}{k-d} \binom{n+d}{k}
  \\
  = \sum_{d=0}^k \binom{n+k-d}{k-d} \binom{n}{d} = \sum_{d=0}^k \binom{n+d}{d} \binom{n} {k-d} .
   \end{gathered}
   \en

   \be
   D(n,n) \sim (3 + 2 \sqrt{2})^n(.57 \sqrt{n} - .067 n^{-3/2} + .006 n^{-5/2} + \dots ) \text{ as } n\to \infty .
   \en

 Now we define the two dynamical systems which concern us. Each is based on a countably infinite directed graph with an ordering on the edges that enter each vertex. The {\em Nicomachus graph} (shown in Figure \ref{fig:NichomachusBrat}) has vertices $(n,k) \in \Bz_+^2$ and an edge from $(n,k)$ to $(n+1,k)$, to $(n,k+1)$, and to $(n+1,k+1)$ for all $n,k \geq 1$. From each vertex $(n,0), n \geq 0$, there are two edges to $(n+1,0)$ one edge to $(n+1,1)$, and, if $n \geq 1$, one edge to $(n,1)$. From each vertex $(0,k), k \geq 0$, there are three edges to $(0,k+1)$, one edge to
 $(1,k+1)$, and, if $k \geq 1$, one edge to $(1,k)$. Each doublet or triplet of edges entering a vertex $(n,0)$ or $(0,k)$ is ordered arbitrarily. The three edges entering a vertex $(n+1,k+1)$ with $n,k \geq 0$ are ordered so that the one from $(n+1,k)$ is the first and the one from $(n,k+1)$ is the last.

 The {\em Delannoy graph} (shown in Figure \ref{fig:Del1}) has vertices $(n,k) \in \Bz_+^2$ and an edge from $(n,k)$ to $(n+1,k)$, to $(n,k+1)$, and to $(n+1,k+1)$ for all $n,k \geq 0$. Thus it is obtained from the Nicomachus graph by replacing the multiple edges along the $x$ and $y$ axes by single edges. Again the three edges entering a vertex $(n+1,k+1)$ with $n,k \geq 0$ are ordered so that the one from $(n+1,k)$ is the first and the one from $(n,k+1)$ is the last.

 In each case the phase space $X$ of our dynamical system will consist of infinite paths $x$ that begin at the origin. The paths have labelings, either corresponding uniquely to their sequences of edges $e_i=e_i(x), i \geq 1$ (with labels from an alphabet $A=\{h,d,v\}$, for horizontal, diagonal, or vertical, respectively, and $A=\{h,d,v,h_1,h_2,v_1,v_2,v_3\}$ when there can be more than one edge between a pair of vertices), or to the sequences $(n_i,k_i)=(n_i(x),k_i(x)), i \geq 0,$ of vertices through which they pass. The space
 $X$ is a compact totally disconnected metric space when we define the distance from $x$ to $y$ to be less than or equal to $1/2^N$ if $e_i(x) = e_i(y)$ for all $i \leq N$. The topology of $X$ is generated by the family of (clopen) {\em cylinder sets} defined by finite sequences $(a_i)$ from $A$,
 \be
 [a_1a_2 \dots a_I] = \{x \in X: e_i(x) = a_i \text{ for all } i \leq I\}.
 \en

 Two paths $x,y \in X$ are {\em tail equivalent} if they coincide from some point on: there is a smallest $N$ such that $e_i(x)=e_i(y)$ for all $i > N$. In such a case we define $x < y$ if $e_N(x) < e_N(y)$. Define $X_{\max}$ to be the set of maximal paths, i.e. those that include only maximal edges, and define $X_{\min}$ to be the set of minimal paths. For each of the two graphs, $X_{\max} \cup X_{\min}$ is countable. The {\em adic transformation} $T: X \setminus X_{\max} \to X \setminus X_{\min}$ is defined by letting $Tx$ equal the smallest $y \in X$ such that $y > x$. Then $T$ is a homeomorphism. We define $T$ and $T^{-1}$ to be the identity on each of $X_{\max}$ and $X_{\min}$. As extended, $T$ is not continuous on $X$. The orbits of $T$ coincide with the equivalence classes of the tail relation.

  \begin{ack*}
The author thanks Jean-Paul Allouche, James Damon, Sarah Bailey Frick, and Fran\c{c}ois Ledrappier for helpful discussions; the University of Paris 6, the Wroc{\l}aw University of Technology, and the Institut Henri Poincar\'{e} for hospitality while this research was underway; and the referees for suggestions that improved the exposition.
\end{ack*}

  \section{Invariant measures}\label{sec:invmeasures}

In the following, by ``measure" we will mean ``Borel probability measure".
For an invariant measure of an adic system, all cylinder sets determined by initial path segments from the root $R$ to a fixed vertex $v$ have the same measure. For vertices $u$ and $v$ in the diagram of the system, denote by $\dim (u,v)$ the number of paths in the diagram from $u$ to $v$. From the Ergodic Theorem it follows that if $\mu$ is an ergodic invariant measure and $C$ is a cylinder set determined by an initial path segment ending at a vertex $v$, then
 \be
 \mu (C) =\lim_{n \to \infty}  \frac{\dim (v,x_n)}{\dim (R,x_n)}
 \en
 for $\mu$-a.e. path $x=(x_n)$ (indexed here by its vertices).
In the case of the Delannoy graph
it is enough to specify the measure of each cylinder set determined by an initial path segment that ends at a ``regular'' vertex $(n_0,k_0)$ in the integer lattice, since the measure of any cylinder ending at one of the ``extra'' vertices $(n_0+1/2,k_0+1/2)$ equals that of any cylinder ending at $(n_0+1,k_0+1)$. We will abbreviate by saying such a cylinder set ``ends at $(n_0,k_0)$''.

Thus to identify the ergodic measures, it is important to have information about path counts between vertices in the diagram that defines the system.
 The asymptotics of Delannoy numbers were determined by Pemantle and Wilson \cite{PemantleWilson2002} by analytic methods:

\be \label{eq:PemWil}
D(n,k) \sim \left( \frac{\sqrt{n^2 + k^2}-k}{n}\right)^{-n} \left( \frac{\sqrt{n^2+k^2}-n}{k}\right)^{-k}  \sqrt{ \frac{nk/(2 \pi)}{(n+k-\sqrt{n^2+k^2})^2 \sqrt{n^2+k^2}}} ,
\en
uniformly if $n/k$ and $k/n$ are bounded.

\begin{thm}\label{thm:NicInvMeas}
 The only ergodic (invariant probability) measures for the Nicomachus adic dynamical system (described by Figure \ref{fig:NichomachusBrat}) are the two unique measures supported on the two boundary odometers.
\end{thm}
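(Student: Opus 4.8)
The plan is to read off the possible values of the Ergodic-Theorem limit $\mu(C)=\lim_{m}\dim(v,x_m)/\dim(R,x_m)$ for a cylinder $C$ ending at a vertex $v=(n_0,k_0)$, using that in the Nicomachus diagram $\dim(R,(n,k))=2^n3^k$. First I would record the relevant path counts. If $n_0,k_0\ge 1$, every path from $(n_0,k_0)$ to $(n,k)$ stays among the vertices with both coordinates $\ge 1$, where all edges have multiplicity one and the recurrence is the Delannoy one, so $\dim((n_0,k_0),(n,k))=D(n-n_0,k-k_0)$; moreover such a path that is purely horizontal or purely vertical is unique, so $\dim((n_0,k_0),(n,k))=1$ when also $k=k_0$ or $n=n_0$. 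Along the axes, $\dim((n_0,0),(n,0))=2^{n-n_0}$ and $\dim((0,k_0),(0,k))=3^{k-k_0}$. The analytic crux, which I would isolate as a lemma, is:

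\emph{Lemma.} As $n,k\to\infty$ with both coordinates tending to infinity (in any manner), $D(n,k)/(2^n3^k)\to 0$; equivalently, $\sup\{D(n,k)/(2^n3^k):n\ge j,\ k\ge j\}\to 0$ as $j\to\infty$.

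Granting the Lemma, the argument would run as follows. Since $n_m(x)$ and $k_m(x)$ are nondecreasing along a path, the sets $A=\{x:\sup_m n_m<\infty\}$ and $B=\{x:\sup_m k_m<\infty\}$ are disjoint, $A\cup B\cup(A\cup B)^c=X$, and each of the three is a union of tail-equivalence classes, hence $T$-invariant; by ergodicity exactly one of them has full measure. If $\mu((A\cup B)^c)=1$, then $n_m,k_m\to\infty$ for $\mu$-a.e.\ $x$, so by the Lemma $\mu(C)=\lim_m D(n_m-n_0,k_m-k_0)/(2^{n_m}3^{k_m})=0$ for every cylinder $C$ ending at a vertex $(n_0,k_0)$ with $n_0,k_0\ge 1$; since the set of paths through a fixed vertex is a finite union of such cylinders, and $\mu$-a.e.\ path lies in the countable union of the sets of paths through all the vertices $(n_0,k_0)$ with $n_0,k_0\ge 1$, this forces $\mu(X)=0$ --- a contradiction. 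If $\mu(B)=1$, I would refine $B$ into the $T$-invariant sets $B_K=\{x:k_m=K\text{ eventually}\}$; ergodicity picks out one $K$ with $\mu(B_K)=1$. For $K\ge 1$, every cylinder ending at a vertex $(n_0,K)$ has measure $\lim_m 1/(2^{n_m}3^{K})=0$, and the same covering argument gives $\mu(X)=0$; so $K=0$, $\mu$ is carried by the $x$-axis paths, and $\mu(C)=\lim_m 2^{n_m-n}/2^{n_m}=2^{-n}$ for every cylinder $C$ ending at $(n,0)$ (while $\mu(C)=0$ for cylinders ending off the $x$-axis). These are exactly the cylinder values of the unique invariant measure of the $2$-adic odometer living along the $x$-axis, so $\mu$ is that measure. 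The case $\mu(A)=1$ is symmetric and produces the $3$-adic odometer measure along the $y$-axis. Since these two odometer measures are uniquely ergodic (hence ergodic) on their tail-saturated supports, they are precisely the ergodic measures.

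To prove the Lemma I would combine the Pemantle--Wilson asymptotics \eqref{eq:PemWil} with elementary estimates near the axes. Note that $(1/2,1/3)$ lies on the curve $x+y+xy=1$, the polar variety of the generating function of $D$, since $\tfrac12+\tfrac13+\tfrac16=1$; hence $2^n3^k$ coincides with the exponential order of $D(n,k)$ only in the single critical direction $n:k=4:3$. Precisely, with $t=k/n$ and
\be
\phi(t)=-\log\!\big(\sqrt{1+t^2}-t\big)-t\log\frac{\sqrt{1+t^2}-1}{t}-\log 2-t\log 3 ,
\en
an elementary computation gives $\phi(3/4)=0$ and $\phi(t)<0$ otherwise, and \eqref{eq:PemWil} yields $D(n,k)/(2^n3^k)=e^{n\phi(k/n)}\sigma(n,k)(1+o(1))$ uniformly for $k/n,n/k$ bounded, where $\sigma(n,k)$ is the square-root factor in \eqref{eq:PemWil} and satisfies $\sigma(n,k)\le C_\delta(n+k)^{-1/2}$ on $\{\delta n\le k\le n/\delta\}$; hence $D(n,k)/(2^n3^k)\le 2C_\delta(n+k)^{-1/2}$ there once $n+k$ is large. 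In the near-axis regions, \eqref{eq:DelFormulas} and the symmetry $D(n,k)=D(k,n)$ give $D(n,k)\le 2^{\min(n,k)}\binom{n+k}{\min(n,k)}$; on $\{k\le\delta n\}$ this yields $D(n,k)/(2^n3^k)\le 2^{-n}\binom{n+k}{k}\le\big((e(1+\delta)/\delta)^{\delta}/2\big)^{n}$, a base $<1$ for $\delta$ small, and symmetrically on $\{n\le\delta k\}$ the surviving factor $3^{-k}$ gives a base $<1$. Fixing such a $\delta$ and splitting $\{n,k\ge j\}$ into these three regions gives $\sup\{D(n,k)/(2^n3^k):n,k\ge j\}\to 0$.

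The main obstacle is the Lemma, and its delicacy is that the situation is exactly critical: $\sum_{n,k}D(n,k)2^{-n}3^{-k}=\big(1-(\tfrac12+\tfrac13+\tfrac16)\big)^{-1}=+\infty$, so $D(n,k)2^{-n}3^{-k}$ is not summable and no soft argument will do; one is forced to use the precise two-variable asymptotics \eqref{eq:PemWil}, and the near-critical direction $n:k=4:3$ survives only because of the polynomial square-root correction. Once the Lemma is in hand, the dichotomy and the identification of the two odometer measures are routine.
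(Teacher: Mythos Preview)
Your proposal is correct and follows essentially the same route as the paper: both reduce to showing $D(n,k)/(2^n3^k)\to 0$, both invoke the Pemantle--Wilson asymptotics \eqref{eq:PemWil} in the bulk (identifying the critical direction $k/n=3/4$ where $A(3/4)=2\cdot 3^{3/4}$ and only the $n^{-1/2}$ factor forces decay), and both finish the near-axis regions with elementary binomial estimates. The only differences are cosmetic: your near-axis bound uses $D(n,k)\le 2^{\min(n,k)}\binom{n+k}{\min(n,k)}$ together with $\binom{m}{j}\le(em/j)^j$, while the paper uses Stirling on $\binom{n}{\epsilon n}$ and $\binom{\epsilon n}{\epsilon n/2}$; and you make the reduction to the two odometer measures explicit via the trichotomy $A\cup B\cup(A\cup B)^c$ and the refinement into the levels $B_K$, whereas the paper simply asserts that once every interior cylinder has measure zero the only survivors are the boundary odometers.
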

\begin{proof}
Let $(n_0,k_0)$ be an interior regular vertex, i.e. $n_0,k_0>0$. We claim that
\be
 \frac{\dim((n_0,k_0),(n_0+n,k_0+k))}{\dim((0,0),(n_0+n,k_0+k))}= \frac{D(n,k)}{2^{n_0+n} 3^{k_0+k}} \to 0 \quad\text{ as } n+k \to \infty .
 \en
 It follows that any ergodic invariant measure must assign measure 0 to any cylinder set determined by an initial path segment that ends at an interior vertex.

    Let $ 0<\epsilon <1$. We consider first $(n,k)$ with $\theta = k/n \in [\epsilon, 1]$. According to the Pemantle-Wilson estimate, for large $n,k$, uniformly in this region,
    \be
    D(n,k) \sim (\sqrt{1+\theta^2}+\theta)^n \left(\frac{\sqrt{1+\theta^2}+1}{\theta}\right)^{\theta n} \frac{\sqrt{\theta /(2 \pi n)}}{\sqrt{(1+\theta-\sqrt{1+\theta^2})^2\sqrt{1+\theta^2}}} .
    \en
    Let
    \be
    \begin{gathered}
    A(\theta)=(\sqrt{1+\theta^2}+\theta)\left(\frac{\sqrt{1+\theta^2}+1}{\theta}\right)^\theta ,
    \quad\ G(\theta)= \log A(\theta) -\log 2 - \theta \log 3 , \\
    \text{ and } \quad B(\theta) = \frac{\sqrt{\theta }}{\sqrt{(1+\theta-\sqrt{1+\theta^2})^2\sqrt{1+\theta^2}}} .
    \end{gathered}
    \en
    We note that $B(\theta)$ is bounded for $\theta \in [\epsilon, 1]$ and $G(\theta)$ has an absolute maximum at $\theta=3/4$, so that
    \be
    \frac{A(\theta)}{2 \cdot 3^\theta} \leq \frac{A(3/4)}{2 \cdot 3^{3/4}} = 1
    \en
    for all $\theta \in [\epsilon,1]$. Hence
    \be
        \frac{D(n,k)}{2^n 3^k} \leq c \, \left( \frac{A(\theta)}{2 \cdot 3^\theta} \right)^n \frac{1}{\sqrt{n}}\to 0
        \en
        uniformly as $n \to \infty$ in this region.

    Second, we consider $(n,k)$ in the region where $k \leq \epsilon n$.
    Let
    \be
    H(\epsilon)= -\epsilon \log\epsilon - (1-\epsilon) \log (1-\epsilon)   \quad\text{ and } \lambda = e^{H(\epsilon)}.
    \en
    Note that $0 \leq H(\epsilon) \leq \log 2$ for all $\epsilon \in [0,1]$ and $\lambda \to 1$ as $\epsilon \to 0^+$.
    By Stirling's Formula,
     \be
     \begin{gathered}
     \binom{n}{\epsilon n} \sim \lambda^n \quad\text{ and } \\
     \binom{ \epsilon n}{d} \leq \binom{\epsilon n}{\epsilon n/2}
      \sim \frac{2^{\epsilon n}}{\sqrt{\pi \epsilon n/2}} \quad\text{ for } d \leq k \leq \epsilon n .
      \end{gathered}
     \en
     Thus, since in this region $D(n,k)$ and $C(n,k)$ are increasing in $k$,
         \be
         \begin{aligned}
    D(n,k) &=  \sum_{d=0}^{k} 2^d \binom{n}{d} \binom{k}{d}
    \leq D(n, \epsilon n) \leq \sum_{d=0}^{\epsilon n} 2^d \lambda^n \binom{\epsilon n}{d} \\
    &\leq \epsilon n 2^{\epsilon n} \lambda^n \frac{2^{\epsilon n}}{\sqrt{\pi \epsilon n/2}}
    \sim c \sqrt{\epsilon n} \lambda^n 2^{2 \epsilon n} = c \sqrt{\epsilon n}(2^{2 \epsilon} \lambda)^n.
    \end{aligned}
    \en
    Choosing $\epsilon$ small enough that
    \be
    2^{2 \epsilon} \lambda < 2 ,
    \en
    we have that $D(n,k)/2^n \to 0$ uniformly in this region as $n \to \infty$.

    In the region $k \geq n$, similar but easier estimates apply, since $D(n,k)$ is symmetric in $n$ and $k$, and $2^n 3^k$ is now much larger than before.
\end{proof}

\begin{thm}\label{thm:DelInvMeas}
 The non-atomic ergodic (invariant probability) measures for the Delannoy adic dynamical system (described by Figure \ref{fig:Del1} or Figure \ref{fig:DelBrat}) are a one-parameter family $\{ \mu_\alpha: \alpha \in [0,1]\}$ given by choosing nonnegative $\alpha, \beta, \gamma$ with $\alpha+\beta+\gamma=1$ and $\beta \gamma = \alpha$ and then putting weight $\beta$ on each horizontal edge, weight $\gamma$ on each vertical edge, and weight $\alpha$ on each diagonal edge. (The measure of any cylinder set is then determined by multiplying the weights on the edges that define it.)
\end{thm}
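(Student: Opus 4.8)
The plan is to prove the two halves of the statement separately: first that each prescribed measure $\mu_\alpha$ is a non-atomic ergodic invariant measure, then that a non-atomic ergodic invariant measure must be one of them. For the first half, the point of the relation $\beta\gamma=\alpha$ is that it forces the measure of a cylinder ending at a regular vertex $(n,k)$ to depend only on $(n,k)$: a path from the root to $(n,k)$ using $c$ diagonal edges uses $n-c$ horizontal and $k-c$ vertical edges, so the product of its weights is $\beta^{\,n-c}\gamma^{\,k-c}\alpha^{\,c}=\beta^{n}\gamma^{k}(\alpha/\beta\gamma)^{c}=\beta^{n}\gamma^{k}$, independent of $c$; and $\alpha+\beta+\gamma=1$ makes the weights on the three edges leaving each vertex sum to $1$, so the prescription defines a genuine Borel probability measure. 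Invariance then follows from the criterion recalled at the start of Section~\ref{sec:invmeasures} (uniformity of cylinder measures on paths to a common vertex). Non-atomicity is immediate when $0<\beta,\gamma<1$, and then $X_{\max}\cup X_{\min}$, being countable, is $\mu_\alpha$-null. For ergodicity I would identify $X$ with $\{h,v,d\}^{\mathbb N}$ via the edge labels (legitimate because each vertex has exactly one exiting edge of each type), so that $\mu_\alpha$ becomes the Bernoulli product with marginal $(\beta,\gamma,\alpha)$; then I would observe that interchanging two adjacent unequal labels leaves the vertices visited by the path from that point on unchanged, hence is a tail move, so the tail relation---which by Section~\ref{sec:intro} is the orbit relation of $T$---contains the relation of finite coordinate permutations. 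Consequently any $T$-invariant set, being a union of tail classes, is a union of finite-permutation orbits, so it lies in the exchangeable $\sigma$-field and the Hewitt--Savage $0$--$1$ law gives it $\mu_\alpha$-measure $0$ or $1$.

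For the second half, let $\mu$ be a non-atomic ergodic invariant measure. By uniformity $\mu$ assigns to every cylinder ending at a regular vertex $(n,k)$ a common value $f(n,k)\ge 0$ with $f(0,0)=1$, and $\mu$ is determined by $f$; decomposing the paths through $(n,k)$ according to the edge leaving it gives the backward relation $f(n,k)=f(n+1,k)+f(n,k+1)+f(n+1,k+1)$. By the Ergodic Theorem, as recalled in Section~\ref{sec:invmeasures}, together with the translation invariance of the diagram, for $\mu$-a.e.\ path $x$ with successive vertices $(n_m,k_m)$,
\[
f(n_0,k_0)=\lim_{m\to\infty}\frac{D(n_m-n_0,\,k_m-k_0)}{D(n_m,k_m)} .
\]
Thus the whole problem reduces to understanding the asymptotics of $D(N-n,M-k)/D(N,M)$ as $(N,M)\to\infty$ (equivalently, computing the minimal Martin boundary of the Delannoy graph).

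Here I would invoke Pemantle--Wilson. Writing $\log D(N,M)=N\phi(M/N)+O(\log N)$, uniformly for $M/N$ in a compact subinterval of $(0,\infty)$, with $\phi(\theta)=\log(\sqrt{1+\theta^{2}}+\theta)+\theta\log\bigl((\sqrt{1+\theta^{2}}+1)/\theta\bigr)$, the estimate \eqref{eq:PemWil} together with a Taylor expansion of $\phi$ about a limiting slope $s$ should give, when $M/N\to s\in(0,\infty)$,
\[
\frac{D(N-n,M-k)}{D(N,M)}\longrightarrow \beta(s)^{n}\gamma(s)^{k},\qquad \beta(s)=e^{\,s\phi'(s)-\phi(s)},\quad \gamma(s)=e^{-\phi'(s)},
\]
with $\beta(s),\gamma(s)>0$; and since the limit function satisfies the backward relation, it follows automatically that $\beta(s)+\gamma(s)+\beta(s)\gamma(s)=1$, which is exactly the constraint in the statement with $\alpha=\beta(s)\gamma(s)$. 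The degenerate directions $M/N\to 0$ (or $M$ bounded) and, symmetrically, $N/M\to 0$ I would treat by elementary estimates from $D(n,k)=\sum_d 2^d\binom nd\binom kd$, which force $f=\mathbf 1_{\{k=0\}}$ or $f=\mathbf 1_{\{n=0\}}$, i.e.\ $\mu$ is the point mass on the all-horizontal or the all-vertical path and so is atomic. To conclude: $\liminf_m k_m/n_m$ and $\limsup_m k_m/n_m$ are unchanged by tail moves, hence $T$-invariant, hence $\mu$-a.e.\ constant; evaluating the (existing) limit $f(1,0)=\lim_m D(n_m-1,k_m)/D(n_m,k_m)$ along subsequences realizing these two constants---and using that the interior values $\beta(s)$ all lie in $(0,1)$ while the two degenerate values are $0$ and $1$---forces all subsequential slopes to give the same point $(\beta,\gamma)$ on the curve $\beta+\gamma+\beta\gamma=1$. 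Then $f(n,k)=\beta^{n}\gamma^{k}$, so $\mu=\mu_\alpha$ with $\alpha=\beta\gamma$, the atomic alternative being excluded by hypothesis.

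The hard part will be the asymptotic analysis in this last stretch: upgrading Pemantle--Wilson to a statement uniform enough to be applied along the (random) sequence of vertices of a generic path, carrying out the Taylor expansion of $\phi$ cleanly, and separately treating---and correctly identifying with the two atomic measures---the boundary directions where Pemantle--Wilson degenerates. This is the Delannoy counterpart of the estimate made in the proof of Theorem~\ref{thm:NicInvMeas}, except that here one needs the full two-sided asymptotics and their uniformity rather than a one-sided upper bound, which is why I expect it to be the main obstacle.
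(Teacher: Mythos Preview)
Your first half matches the paper: invariance from the identity $\beta\gamma=\alpha$, non-atomicity, and ergodicity via the Hewitt--Savage $0$--$1$ law after identifying $X$ with $\{h,d,v\}^{\mathbb N}$ (the paper also records an alternative collision/recurrence argument).

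For the second half you take a genuinely different route. You propose to compute the Martin boundary by extracting the limits $D(N-n,M-k)/D(N,M)$ from the Pemantle--Wilson asymptotics~(\ref{eq:PemWil}); the paper instead avoids asymptotics entirely and adapts M\'ela's \emph{isotropy argument}. Concretely, the paper first shows that any adic-ergodic $\mu$ is shift-invariant on $\{h,d,v\}^{\mathbb N}$, and then that the ratio $\mu(C^t)/\mu(C)$ is the same for every cylinder $C$ and each label $t\in\{h,d,v\}$. Both steps use only the translation-invariance of the graph (so that $\dim(C,x_i^t)=\dim(\sigma_tC,x_i)$ and $\dim(C^t,x_i)=\dim(C^{st},(\sigma_sx)_{i+1})$) together with the bare existence of ergodic limits---no quantitative estimate on Delannoy numbers enters at all. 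This buys a short, self-contained argument that sidesteps precisely the uniformity issue you flag as the main obstacle. Indeed, the paper's Remark immediately following the proof says explicitly that it is \emph{not clear} that~(\ref{eq:PemWil}) is sharp enough either to give the limit $\beta^{n_0}\gamma^{k_0}$ for every path with $k_i/n_i\to\rho$ or to ``calculate the measure of any cylinder extended by a single edge and conclude that every ergodic measure is one of the measures $\mu_\alpha$''; in other words, the author considered your route and set it aside. Your outline is reasonable (the Taylor expansion of $\phi$ does produce the correct exponents, and the tail-invariance of $\liminf$ and $\limsup$ of the slope is fine), but as it stands it is a plan rather than a proof: the uniform error control in~(\ref{eq:PemWil}) along an arbitrary generic path, and the separate treatment of the degenerate directions, are promised rather than delivered.
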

\begin{proof}
  Each $\mu_\alpha$ as above is adic-invariant (by the conditions). It describes an i.i.d. random walker who starts at $(0,0)$ and at each time $t=0,1,2,\dots$ can choose independently whether to take a diagonal, horizontal, or vertical step. The probability of each finite initial path segment is the probability of the cylinder set that it defines. We will give two proofs that each $\mu_\alpha$ is ergodic, first by means of the Hewitt-Savage 0,1 Law (see \cite{Billingsley1995}), and then by a collision argument as in \cite{BKPS2006}, using known conditions for recurrence of random walks.

(1) Identify the set $X$ of infinite paths from the root in the Delannoy graph with $\{h,d,v\}^{\mathbb N}$ by using the edge labels. Then $\mu_{\alpha}$ is a Bernoulli measure for the stationary process in which the symbols $h,d,v$ arrive independently with probabilities $\beta, \alpha, \gamma$, respectively. If $A \subset X$ is a Borel set that is invariant under the adic transformation $T$, then it is also invariant under permutations of finitely many coordinates. Therefore the $\sigma$-algebra of adic-invariant Borel sets is contained in the $\sigma$-algebra of symmetric sets (those which are invariant under permutations of finitely many coordinates). Since the latter algebra is trivial by the Hewitt-Savage Theorem, $A$ has measure $0$ or $1$, and thus
  the measure $\mu_\alpha$ is ergodic.

 (2) The ``collision argument'' presented in \cite{BKPS2006} establishes ergodicity of $\mu_\alpha$ once we show that for $\mu_\alpha \times \mu_\alpha$-a.e. pair of infinite paths $(x,y) \in X \times X$, with ``regular'' vertices $(n_i,k_i)$ and $(r_i,s_i)$ respectively, there are infinitely many $i$ for which $(n_i,k_i)=(r_i,s_i)$.

 As mentioned above, paths $x \in X$ are trajectories of a random walk in $\Bz^2$ with independent increments $(1,1), (0,1), (1,0)$ having probabilities $\alpha, \beta, \gamma$, respectively. We want to show recurrence (i.e., infinitely many returns to the origin of the symmetric (mean $0$) process $Z=X-X$. This process has independent increments as shown in the first array below, each with probability given in the second array below:
 \be
 \left(
 \begin{matrix}
 (0,0) & (0,1) & (1,0) \\
 (0,-1) & (0,0) & (1,-1) \\
 (-1,0) & (-1,1) & (0,0)
 \end{matrix}
 \right)
 \quad\qquad
 \left(
 \begin{matrix}
 \alpha^2 & \alpha\beta & \alpha\gamma \\
 \beta\alpha & \beta^2 & \beta\gamma \\
 \gamma\alpha & \gamma\beta & \gamma^2
 \end{matrix}
 \right)
 \en
 (the rows and columns are each indexed by the possible increments listed above). Since the walk in $\Bz^2$ has mean 0 and finite second moment, it is recurrent  \cite{Spitzer1964}*{p. 83}.

(3)  Now we prove that every adic-invariant ergodic measure for the Delannoy system is Bernoulli for the shift, i.e. is one of the measures $\mu_\alpha$ described above,
by applying the isotropy arguments of \cite{Mela2006}. Let $\mu$ be an adic-invariant ergodic measure. Since $\mu$ is non-atomic and adic-invariant,  $\mu[t]>0$ for each $t \in A=\{d,h,v\}$. Let $C=[c_0 \dots c_n]$ be a cylinder set determined by a string $c_0 \dots c_n$ on the alphabet $A$. For a finite or infinite string $w$ on $A$ and $t \in A$, let $\sigma_t w=tw$. Denote $C^t=[c_0 \dots c_nt]$, and note that $\sigma_t C$ and $C^t$ end at the same vertex. For a vertex $y=(n,k)$, let $y^h=(n-1,k), y^v=(n,k-1)$, and $y^d=(n-1,k-1)$.

First we show that the given ergodic measure $\mu$ is shift invariant. Let $x \in X$ be an infinite path labeled by a string $(\omega_i) \in A^{\Bz_+}$. Denote by $x_i=(n_i,k_i)$ the terminal vertex of the path $\omega_0 \dots \omega_i$. The set of paths from the terminal vertex of $C$ to $x_i$ breaks into three disjoint sets, depending on whether the last entry of the string labeling the path is $d, h$, or $v$. Thus
\be
\dim(C,x_i)=\dim(C,x_i^d) + \dim(C,x_i^h) + \dim(C,x_i^v) .
\en
By isotropy of the graph, for each $t \in A$,
\be
\dim(C,x_i^t) = \dim(\sigma_t C, x_i) ;
\en
and
\be
\sigma^{-1}C = \sigma_d C \cup \sigma_h C \cup \sigma_v C \quad\text { (disjoint union) }.
\en
Thus dividing the equation
\be
\dim(C,x_i)= \dim(\sigma_d C,x_i)+\dim(\sigma_h C,x_i) + \dim(\sigma_v C,x_i)
\en
through by $\dim(x_i)$ and letting $i \to \infty$, if $x$ is generic for $\mu$, we conclude that $\mu (C) = \mu(\sigma^{-1}C)$.

Now we will show that $\mu(C^t)/\mu(C)$ is independent of the cylinder set $C$ for each $t \in A$, thereby concluding the proof.
First, we show that
 \be\label{Eq:sameratio}
 \frac{\mu(C^t)}{\mu(C)} = \frac{\mu(C^{st})}{\mu(C^s)} \quad\text{ for all } s,t \in A .
 \en

 Note that by isotropy (it doesn't matter where the extra step is inserted, we still arrive at the same terminal vertex),
 \be
 \dim(C^t,x_i) = \dim(C^{st},(\sigma_sx)_{i+1}), \qquad \dim(C,x_i) = \dim(C^s, (\sigma_sx)_{i+1}) .
 \en
 Dividing, we have
 \be
 \frac{\dim(C^t,x_i)}{\dim(C,x_i)} = \frac{\dim(C^{st},(\sigma_sx)_{i+1})}{\dim(C^s, (\sigma_sx)_{i+1})} .
 \en
 Divide top and bottom on the left by $\dim(x_i)$, on the right by $\dim((\sigma_sx)_{i+1})$, and let $i \to \infty$. As in \cite{Mela2006}, if $E$ is the set of $\mu$-measure 1 of points $x$ that give the correct values for the limits on the left-hand side, then $\mu[s]>0$ implies that $\mu(\sigma_s E)>0$, and hence $\mu(\sigma_s E \cap E) >0$. Thus using any $x \in \sigma_s E \cap E$ yields Formula (\ref{Eq:sameratio}).

 We can then compute that
 \be
 \frac{\mu(C^t)}{\mu(C)} = \frac{\mu[c_0\dots c_{n-1}]^{c_nt}}{\mu[c_0\dots c_{n-1}]^{c_n}} = \frac{\mu[c_0\dots c_{n-1}]^{t}}{\mu[c_0\dots c_{n-1}]}= \dots = \frac{\mu[c_0]^t}{\mu[c_0]} .
 \en
 By adic invariance of $\mu$, for $r,s,t \in A$,
 \be
 \mu[st]=\mu[ts] \qquad\text{ and } \mu[rst]=\mu[srt] ,
 \en
 since the paths defining the cylinders end at the same vertex in each case.
 Then
 \be
 \frac{\mu[s]^t}{\mu[s]} = \frac{\mu[s]^{rt}}{\mu[s]^r} = \frac{\mu[srt]}{\mu[sr]}=\frac{\mu[rst]}{\mu[rs]}=\frac{\mu[r]^{st}}{\mu[r]^s}
 =\frac{\mu[r]^t}{\mu[r]},
 \en
 so that $\mu[c_0]^t/\mu[c_0]$ is independent of $c_0$.
 \end{proof}

\begin{rem}
Suppose that $\mu_{\alpha}$ is a measure on the Delannoy system as described in the statement of Theorem \ref{thm:DelInvMeas}.
Suppose that $C$ is a cylinder set determined by a path segment from $(0,0)$ to a regular vertex $(n_0,k_0)$. Because $\mu_\alpha$ is Bernoulli, hence invariant and ergodic for the shift on the space of infinite one-sided sequences on the alphabet of three symbols $\{d,h,v\}$ (for diagonal, horizontal, or vertical steps taken by the random walker), for $\mu_\alpha$-a.e. infinite path $x=(n_i,k_i)$ starting at $(0,0)$ in the Delannoy graph, we have
 \be
 k_i/n_i \to \rho = \frac{\alpha+\gamma}{\alpha+\beta}.
 \en
 From the ergodicity of $\mu_{\alpha}$, which we have proved above, it follows that for a.e. path $(n_i,k_i)$,
 \be
 \frac{\dim((n_0,k_0),(n_i,k_i))}{\dim((0,0),(n_i,k_i))} \to (\sqrt{1+\rho^2}+\rho)^{-n_0} \left( \frac{\sqrt{1+\rho^2}+1}{\rho} \right)^{-k_0} \quad\text{ as } i \to \infty .
 \en
 It is not clear that the Pemantle-Wilson asymptotics (\ref{eq:PemWil}) yield this conclusion for all paths with $k_i/n_i \to \rho$. Similarly, it is not clear that the asymptotics would allow one to calculate the measure of any cylinder extended by a single edge and conclude that every ergodic measure is one of the measures $\mu_{\alpha}$.
 \end{rem}

\section{Total ergodicity}\label{sec:TotErg}

  In this section we will show that with respect to each of its ergodic measures the Delannoy adic system is {\em totally ergodic}, i.e. has no proper roots of unity among its $L^2$ eigenvalues. The proof depends on congruence properties of the Delannoy numbers with respect to primes. If $\lambda$ is an eigenvalue of $T$, then approximating an eigenfunction by linear combinations of characteristic functions of cylinders, many of whose points return after $\dim(R,x_n)$ iterates, shows that
  \be \label{eq:evalue}
  \lambda^{D(n_i,k_i)}\to 1 \qquad \text{ for a.e. path } x=((n_i,k_i)) .
  \en
  If $\lambda = e^{2 \pi i m/n}$ with $m,n$ relatively prime, if $p$ is a prime that divides $n$, and if a.e. path $x=((n_i,k_i))$ includes infinitely many vertices for which $D(n_i,k_i)$ is not divisible by $p$, then it is impossible for the convergence in (\ref{eq:evalue}) to hold---see \cite{Petersen2002}. We will find ``blocking sets": configurations of vertices with $D(n,k)$ not divisible by $p$ that must be hit infinitely many times by each infinite path in the diagram.

  A basic ingredient in the following argument is the well-known formula of Lucas \cite{Lucas1878}: If $p$ is a prime and $n=n_0 + n_1 p + n_2 p^2 + \dots, k=k_0 + k_1 p + k_2 p^2 + \dots ,$ are the base $p$ expansions of $n$ and $k$ (each $n_i, k_i \in \{0,1,\dots, p-1\}$) then
  \be
  \binom{n}{k} \equiv_p \binom{n_0}{k_0} \binom {n_1}{k_1} \dots .
  \en

\begin{lem}\label{lem:PrimeCong}
For $p$ prime, $r \geq 0$, and $j=0,1,\dots ,p^r-1$,
\be
C(p^r-1,j)=\binom{p^r-1}{j} \equiv_p (-1)^j .
\en
\end{lem}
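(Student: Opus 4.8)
The plan is to apply the Lucas congruence directly, since $p^r-1$ has an especially simple base-$p$ expansion. First I would observe that
\be
p^r - 1 = (p-1) + (p-1)p + (p-1)p^2 + \dots + (p-1)p^{r-1},
\en
so every base-$p$ digit of $p^r-1$ equals $p-1$. Writing the base-$p$ expansion of $j$ as $j = j_0 + j_1 p + \dots + j_{r-1}p^{r-1}$ with each $j_i \in \{0,1,\dots,p-1\}$ (note $0 \le j \le p^r-1$ forces exactly $r$ digits, possibly with leading zeros), Lucas's formula gives
\be
\binom{p^r-1}{j} \equiv_p \prod_{i=0}^{r-1} \binom{p-1}{j_i} .
\en
In particular every factor $\binom{p-1}{j_i}$ is nonzero mod $p$, which already shows $C(p^r-1,j) \not\equiv_p 0$; but I need the sharper claim that the product equals $(-1)^j$.

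Next I would establish the single-digit identity $\binom{p-1}{a} \equiv_p (-1)^a$ for $0 \le a \le p-1$. This follows by a short induction on $a$ using the relation $\binom{p-1}{a} = \binom{p-1}{a-1}\cdot \frac{p-a}{a}$, together with the fact that $p - a \equiv_p -a$, so each step multiplies by $-1$ modulo $p$; the base case $\binom{p-1}{0}=1$ is clear. (Alternatively one can expand $(1+x)^{p-1} = (1+x)^p/(1+x) \equiv_p (1+x^p)/(1+x) = 1 - x + x^2 - \dots + x^{p-1} \pmod p$ and read off coefficients.) Substituting this into the product yields
\be
\binom{p^r-1}{j} \equiv_p \prod_{i=0}^{r-1} (-1)^{j_i} = (-1)^{j_0 + j_1 + \dots + j_{r-1}} .
\en

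Finally I would reconcile the exponent $j_0 + \dots + j_{r-1}$ (the digit sum of $j$ in base $p$) with $j$ itself modulo $2$. If $p$ is odd, then $p \equiv 1 \pmod 2$, so $j = \sum_i j_i p^i \equiv \sum_i j_i \pmod 2$, and we are done. If $p = 2$, then each $j_i \in \{0,1\}$ and $(-1)^{j_i} = (-1)^{j_i p^i}$ since $(-1)^{2^i} = 1$ when $i \ge 1$ and trivially when $i=0$; hence $\prod_i (-1)^{j_i} = (-1)^{\sum_i j_i 2^i} = (-1)^j$ as well. (In fact for $p=2$ the statement just says $\binom{2^r-1}{j}$ is always odd, which is immediate from Lucas.) In all cases $C(p^r-1,j) \equiv_p (-1)^j$.

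I do not expect any genuine obstacle here: the only place requiring a little care is the single-digit identity $\binom{p-1}{a} \equiv_p (-1)^a$ and the parity bookkeeping relating the base-$p$ digit sum of $j$ to $j \bmod 2$ when $p$ is odd. Both are routine; the proof is essentially a two-line consequence of Lucas once those are in hand.
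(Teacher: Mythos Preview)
Your proof is correct. It is essentially the paper's \emph{alternative} argument: both establish the single-digit identity $\binom{p-1}{a}\equiv_p(-1)^a$ via the recursion $(a+1)\binom{p-1}{a+1}=(p-1-a)\binom{p-1}{a}$ and then invoke Lucas. The only difference is the endgame: you compute the product as $(-1)^{\sum j_i}$ and then argue that the base-$p$ digit sum has the same parity as $j$ (treating $p=2$ separately), whereas the paper tracks what happens to the sign as $j$ increments by one, including the carry case.

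The paper's \emph{primary} proof is shorter and avoids the parity bookkeeping altogether: it uses Pascal's rule $\binom{p^r-1}{j}+\binom{p^r-1}{j+1}=\binom{p^r}{j+1}$, notes that the right side is $\equiv_p 0$ for $0<j+1<p^r$ (by Lucas), and concludes that consecutive entries of row $p^r-1$ are negatives of each other mod $p$. Since the row starts at $1$, the result follows immediately. This sidesteps the digit-sum-versus-$j$ parity issue entirely. Your approach is perfectly valid, but the Pascal-rule argument is the cleaner one-liner.
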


\begin{proof}
$C(p^r-1,j)\equiv_p1$ at $j=0$ and $j=p^r-1$, and $C(p^r-1,j)+C(p^r-1,j+1) \equiv_p 0$ for $j=0, 1, \dots ,p^r-2$ since, by Lucas' formula, $C(p^r,j) \equiv_p 0$ for $1 < j < p^r$.

 Alternatively,
 \be
 (i+1)C(p-1,i+1) = (p-(i+1)) C(p-1,i) \equiv_p -(i+1) C(p-1,i) \text{ for } i<p-1 ,
 \en
 so
 \be
 C(p-1,i+1) \equiv_p -C(p-1,i) \text{ if } i<p-1 .
 \en
 Thus, again by Lucas' formula, increasing any entry $i$ of the expansion of $j$ base $p$ multiplies $C(p^r-1,j)$ mod $p$ by $-1$. And if the entries of the expansion of $j$ base $p$ are $(p-1, p-1, \dots , 0, j_s)$ with $j_s < p-1$, then the entries of $j+1$ base $p$ are $(0, 0, \dots, 0, j_s+1)$, and $C(p-1,p-1)=C(p-1,0)$, so again the sign changes when we pass from $j$ to $j+1$.
 \end{proof}

\begin{lem}\label{lem:BinCong}
For $p$ prime, $r \geq 0$, $j=0,1,\dots ,p^r-1$, and $i=0,1,\dots$,
\be
\binom{j+ip^r}{p^r-1} \equiv_p \binom{j}{p^r-1} .
\en
\end{lem}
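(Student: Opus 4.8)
The plan is to obtain both sides directly from Lucas' formula after passing to base $p$. First I would dispose of the degenerate case $r=0$: then $p^r-1=0$, the index $j$ can only be $0$, and both coefficients equal $\binom{0}{0}=1$, so there is nothing to prove. Assume $r\ge 1$. Since $0\le j\le p^r-1$, the base-$p$ expansion of $j$ involves only the digits in positions $0,\dots,r-1$, say $j=j_0+j_1p+\dots+j_{r-1}p^{r-1}$ with $j_\ell\in\{0,\dots,p-1\}$. Writing $i=i_0+i_1p+i_2p^2+\dots$ in base $p$, the crucial (elementary) observation is that, because $j<p^r$, there is no carrying when we add $ip^r$ to $j$: the base-$p$ expansion of $j+ip^r$ is
\[
j+ip^r=j_0+j_1p+\dots+j_{r-1}p^{r-1}+i_0p^r+i_1p^{r+1}+\dots,
\]
i.e. its digits in positions $0,\dots,r-1$ are exactly those of $j$ and its digits in positions $\ge r$ are exactly those of $i$.

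Next I would record that $p^r-1$ has base-$p$ expansion $(p-1)+(p-1)p+\dots+(p-1)p^{r-1}$, so its digit in each of the positions $0,\dots,r-1$ is $p-1$ and its digit in every position $\ge r$ is $0$. Applying Lucas' formula to $\binom{j+ip^r}{p^r-1}$ then gives
\[
\binom{j+ip^r}{p^r-1}\;\equiv_p\;\Bigl(\prod_{\ell=0}^{r-1}\binom{j_\ell}{p-1}\Bigr)\Bigl(\prod_{m\ge 0}\binom{i_m}{0}\Bigr)\;=\;\prod_{\ell=0}^{r-1}\binom{j_\ell}{p-1},
\]
since every $\binom{i_m}{0}=1$. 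Applying Lucas' formula to $\binom{j}{p^r-1}$ (with the expansion of $j$ padded by zeros beyond position $r-1$) gives the very same product $\prod_{\ell=0}^{r-1}\binom{j_\ell}{p-1}$. Comparing the two congruences proves the lemma. One may note that this common residue equals $1$ precisely when every $j_\ell=p-1$, i.e. when $j=p^r-1$, and $0$ otherwise, which is consistent with the integer identity $\binom{j}{p^r-1}\in\{0,1\}$ valid for $0\le j\le p^r-1$.

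I do not expect any real obstacle: the entire content is the bookkeeping that the digits of $i$ occupy positions where $p^r-1$ has a $0$, and so contribute only the trivial factors $\binom{i_m}{0}=1$ to the Lucas product and cannot change the residue. The single point requiring care is the convention in Lucas' formula when a numerator digit is smaller than the corresponding denominator digit (then the factor is $\binom{j_\ell}{p-1}=0$, as with any $j_\ell<p-1$), but this possibility occurs identically on both sides and is therefore harmless. If one prefers to avoid base-$p$ expansions of $j$, the same conclusion follows from Vandermonde's identity: $\binom{j+ip^r}{p^r-1}=\sum_{\ell=0}^{p^r-1}\binom{ip^r}{\ell}\binom{j}{p^r-1-\ell}$, where for $1\le\ell\le p^r-1$ the factor $\binom{ip^r}{\ell}$ vanishes mod $p$ by Lucas (the low $r$ base-$p$ digits of $ip^r$ are $0$, while $\ell$ has a nonzero digit among them), leaving only the $\ell=0$ term $\binom{ip^r}{0}\binom{j}{p^r-1}=\binom{j}{p^r-1}$.
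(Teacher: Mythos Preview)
Your proof is correct and follows essentially the same route as the paper: write out the base-$p$ expansions of $j$, $j+ip^r$, and $p^r-1$, observe that adding $ip^r$ only affects digits in positions $\ge r$, where the corresponding digit of $p^r-1$ is $0$ and so contributes a factor $\binom{\cdot}{0}=1$ in Lucas' formula. Your Vandermonde alternative is a nice extra, but the main argument is the paper's argument, stated with slightly more detail.
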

\begin{proof}
Let $k=p^r-1=(p-1) + (p-1)p= \dots + (p-1)p^{r-1}$ and $j=j_0+j_1p + \dots +j_{r-1}p^{r-1}+j_rp^r+\dots$, with all $j_s\in \{0,1,\dots,p-1\}$. (So the base $p$ entries of $k$ are $k_m=p-1$ if $m = 0, 1, \dots, r-1$, $k_m=0$ for $m>r-1$.)

Then for $i=0,1,\dots$, we have
\be
j+ip^r = j_0 + j_1p + \dots +j_{r-1}p^{r-1} + \dots .
\en
When we pass from $j$ to $j+ip^r$, in the base $p$ expansion only entries $j_m$ for $m \geq r$ can change, say to $j_m'$. But for $m \geq r$, $C(j_m',0)=C(j_m,0)=1$. Thus $C(j_m,k_m)=C(j_m',k_m)$ for all $m$, and hence, by Lucas' formula, $C(j+ip^r,k)\equiv_p C(j,k)$.
\end{proof}

\begin{lem}\label{lem:MoreCong}
For $p$ prime, $r \geq 0$, and $j=1,\dots ,p^r-1$,
\be
\binom{p^r-1+j}{p^r-1} \equiv_p 0.
\en
\end{lem}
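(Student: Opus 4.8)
The plan is to obtain this as a short corollary of Lemma \ref{lem:BinCong}. Since $1 \le j \le p^r-1$, setting $j'=j-1$ gives $0 \le j' \le p^r-2$, so $j'$ lies in the range $\{0,1,\dots,p^r-1\}$ covered by that lemma. Writing $p^r-1+j = j' + 1\cdot p^r$ and applying Lemma \ref{lem:BinCong} with its ``$i$'' equal to $1$ yields
\[
\binom{p^r-1+j}{p^r-1} = \binom{j'+p^r}{p^r-1} \equiv_p \binom{j'}{p^r-1}.
\]
Now $\binom{j'}{p^r-1}=0$ as an ordinary integer, since the lower index $p^r-1$ strictly exceeds the upper index $j' \le p^r-2$; hence the left side is $\equiv_p 0$, as claimed.

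If instead one prefers an argument that does not invoke Lemma \ref{lem:BinCong}, I would appeal directly to Lucas' formula. Expand $j-1 = a_0 + a_1 p + \dots + a_{r-1}p^{r-1}$ in base $p$ (legitimate because $0 \le j-1 < p^r$). Then $p^r-1+j = p^r + (j-1)$ has base-$p$ digits $a_0,\dots,a_{r-1}$ in positions $0,\dots,r-1$ and digit $1$ in position $r$, while $p^r-1$ has digit $p-1$ in each of positions $0,\dots,r-1$ and $0$ elsewhere. Lucas' formula then gives
\[
\binom{p^r-1+j}{p^r-1} \equiv_p \binom{1}{0}\prod_{m=0}^{r-1}\binom{a_m}{p-1} = \prod_{m=0}^{r-1}\binom{a_m}{p-1}.
\]
Since $j-1 < p^r-1$, not every $a_m$ equals $p-1$, so at least one factor $\binom{a_m}{p-1}$ vanishes and the product is $0$.

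There is essentially no obstacle here; the only point requiring care is the index bookkeeping, namely checking that the shifted index $j-1$ falls inside the hypothesis range of Lemma \ref{lem:BinCong} and that the resulting coefficient $\binom{j-1}{p^r-1}$ is genuinely zero (not merely zero mod $p$). I would present the first, two-line argument as the main proof, since Lemma \ref{lem:BinCong} has just been established.
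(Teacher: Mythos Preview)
Both arguments are correct. Your primary proof, which reduces the claim to Lemma~\ref{lem:BinCong} with $i=1$ and then observes that $\binom{j-1}{p^r-1}=0$ outright because $j-1<p^r-1$, is a genuinely different and more economical route than the paper's. The paper proceeds directly via Lucas' formula: it writes out the base-$p$ digits of $j$, then forms $p^r+j-1$ by a subtraction-with-borrow argument to locate a digit strictly less than $p-1$ in some position $<r$, making the corresponding Lucas factor vanish. Your approach buys brevity and reuses Lemma~\ref{lem:BinCong}, which has just been proved, so nothing is lost logically; the paper's approach is self-contained but requires tracking the borrow.

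Your alternative direct-Lucas argument is essentially the paper's method, but your bookkeeping is cleaner: by expanding $j-1$ in base $p$ from the start (rather than expanding $j$ and then subtracting~$1$), you sidestep the borrowing discussion entirely and read off the digits of $p^r-1+j = p^r + (j-1)$ immediately.
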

\begin{proof}
Let $p$ be prime, $k=p^r-1$, $j \in \{0,1,\dots,p^r-1\}$. Let $k_m, j_m$ be the entries in the base $p$ expansions of $k$ and $j$, respectively, as in the proof of Lemma \ref{lem:BinCong}, so that some (first) $j_m>0$.

Then $p^r+j=j_0 + j_1 p + \dots + j_{r-1} p^{r-1} + p^r$, and when we form $p^r+j-1$, we decrease the first nonzero $j_m$ (which is in $\{1,\dots,p-1\}$), so that the base $p$ expansion of $p^r+j-1$ has an entry $j_i'=j_i-1<p-1$, whence $C(j_i',p-1)=0$.
\end{proof}

\begin{thm}\label{thm:DelCong}
For $p$ prime, $r \geq 0$, and $n=0,1,2, \dots$,
\be
D(n,p^r-1) \equiv_p (-1)^{(n \mod p^r)} .
\en
\end{thm}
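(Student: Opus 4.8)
The plan is to compute $D(n,p^r-1)$ modulo $p$ by substituting one of the closed forms for the Delannoy numbers into the three congruences of Lemmas~\ref{lem:PrimeCong}, \ref{lem:BinCong}, and \ref{lem:MoreCong}. I would begin with the identity
\be
 D(n,k)=\sum_{d=0}^{k}\binom{k}{d}\binom{n+d}{k}
\en
from \eqref{eq:DelFormulas} and set $k=p^r-1$. (This form is equivalent by Vandermonde to the manifestly symmetric expression $\sum_d 2^d\binom{n}{d}\binom{k}{d}$, hence holds for all $n\ge 0$; in any case the range $0\le n<p^r-1$ is dealt with directly by symmetry, $D(n,p^r-1)=D(p^r-1,n)=\sum_{d=0}^{n}2^d\binom{p^r-1}{d}\binom{n}{d}\equiv_p\sum_{d=0}^{n}(-2)^d\binom{n}{d}=(-1)^n$, the middle congruence being Lemma~\ref{lem:PrimeCong}.) Applying Lemma~\ref{lem:PrimeCong}, which gives $\binom{p^r-1}{d}\equiv_p(-1)^d$ for $0\le d\le p^r-1$, yields
\be
 D(n,p^r-1)\equiv_p\sum_{d=0}^{p^r-1}(-1)^d\binom{n+d}{p^r-1}\pmod p .
\en

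The crucial point is that $\binom{n+d}{p^r-1}$ almost always vanishes modulo $p$. By Lemma~\ref{lem:BinCong}, $\binom{m}{p^r-1}\bmod p$ depends only on $m\bmod p^r$, and on the representatives $m\in\{0,1,\dots,p^r-1\}$ it is just the ordinary binomial coefficient, which is $0$ for $m<p^r-1$ and $1$ for $m=p^r-1$; Lemma~\ref{lem:MoreCong} records the same vanishing directly for $p^r\le m\le 2p^r-2$. Hence $\binom{m}{p^r-1}\equiv_p 1$ exactly when $m\equiv-1\pmod{p^r}$, and $\binom{m}{p^r-1}\equiv_p 0$ otherwise. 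Since $n,n+1,\dots,n+p^r-1$ form a complete residue system modulo $p^r$, exactly one index $d$ survives in the sum above, namely $d^*=p^r-1-(n\bmod p^r)$, so that
\be
 D(n,p^r-1)\equiv_p(-1)^{d^*}=(-1)^{p^r-1-(n\bmod p^r)} .
\en

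It then remains only to simplify the exponent. When $p$ is odd, $p^r-1$ is even, so $(-1)^{p^r-1-(n\bmod p^r)}=(-1)^{n\bmod p^r}$, which is the claim. When $p=2$ the statement merely asserts that $D(n,2^r-1)$ is odd (vacuously true when $r=0$), and $(-1)^{d^*}=\pm1$ furnishes this. I do not expect a genuine obstacle here: the only real care is in the sign bookkeeping---identifying the exponent $d^*$ and the mod-$p^r$ reduction that isolates the single surviving term---together with the observation that the Delannoy identity used is legitimate for all $n$ (equivalently, the separate treatment of $n<p^r-1$ noted above). Lemmas~\ref{lem:PrimeCong}--\ref{lem:MoreCong} were arranged precisely so that they combine to give this result.
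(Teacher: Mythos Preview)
Your proof is correct and follows essentially the same route as the paper: both start from $D(n,k)=\sum_{d}\binom{k}{d}\binom{n+d}{k}$ with $k=p^r-1$ and use Lemmas~\ref{lem:PrimeCong}--\ref{lem:MoreCong} to isolate a single surviving term. The only difference is the order of application: the paper first reduces $n$ to $m=n\bmod p^r$ via Lemma~\ref{lem:BinCong}, reindexes $d\mapsto k-d$, and then applies Lemma~\ref{lem:MoreCong} to land on $d=m$ with value $\binom{k}{m}\equiv_p(-1)^m$; you instead apply Lemma~\ref{lem:PrimeCong} first to replace $\binom{k}{d}$ by $(-1)^d$ and then pick out $d^*=k-m$, which yields the same answer after your parity observation on $p^r-1$. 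Your explicit handling of the range $n<p^r-1$ and of the $p=2$ sign is a little more careful than the paper's write-up, but the argument is the same.
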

\begin{proof}
 Let $k=p^r-1$ and $m= (n \mod p^r) \in \{0,1,\dots,p^r-1\}$. Using Lemma \ref{lem:BinCong},
 \be
 D(n,k) = \sum_{d=0}^k C(k,d) C(n+d,k) \equiv_p \sum_{d=0}^k C(k,d) C(m+d,k)= \sum_{d=0}^k C(k,d) C(m+k-d,k).
 \en
 By Lemma \ref{lem:MoreCong}, the only nonzero term mod $p$ is at $m-d=0$. Thus, if $n=\sum m_i p^i$ with each $m_i \in \{0,1, \dots, p-i\}$, the sum reduces to
 \be
 C(k,m) \equiv_p C(p-1,m_0) \cdots C(p-1, m_{r-1}) \neq 0 .
 \en
 In fact, by Lemma \ref{lem:PrimeCong}, $C(k,m) \equiv_p \pm 1$.
 \end{proof}

 \begin{thm}\label{thm:TotalErg}
 With respect to each of its ergodic (invariant probability) measures, the Delannoy adic dynamical system is totally ergodic (i.e., has among its eigenvalues no roots of unity besides $1$).
 \end{thm}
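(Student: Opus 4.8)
The plan is to derive total ergodicity from the congruence in Theorem~\ref{thm:DelCong} by exhibiting, for each prime $p$, a family of ``blocking lines'' of vertices at which $D$ is not divisible by $p$ and which no infinite path from the root can ultimately avoid, and then feeding this into the spectral obstruction recorded in~\eqref{eq:evalue}. We may assume the ergodic measure $\mu$ is non-atomic, hence equal to some $\mu_\alpha$ by Theorem~\ref{thm:DelInvMeas}; an atomic ergodic measure is carried by a $T$-fixed (maximal or minimal) path, so the associated one-point system has $1$ as its only eigenvalue and presents no issue. Suppose, for a contradiction, that $T$ has an eigenvalue $\lambda=e^{2\pi i m/n}$ with $n\ge 2$ and $\gcd(m,n)=1$, and fix a prime $p\mid n$. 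By~\eqref{eq:evalue} (the argument of~\cite{Petersen2002}), for $\mu_\alpha$-a.e.\ path $x=((n_i,k_i))$ one has $\lambda^{D(n_i,k_i)}\to 1$; since the numbers $\lambda^{D(n_i,k_i)}$ lie in the finite set of $n$-th roots of unity, this forces $\lambda^{D(n_i,k_i)}=1$, i.e.\ $n\mid D(n_i,k_i)$ and in particular $p\mid D(n_i,k_i)$, for all sufficiently large $i$.

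Next I produce the blocking sets. By Theorem~\ref{thm:DelCong}, $D(m,p^r-1)\equiv_p(-1)^{m\bmod p^r}$ for every $m\ge 0$ and every $r\ge 1$, so $p\nmid D(m,p^r-1)$; by the symmetry $D(m,\ell)=D(\ell,m)$ the same holds with the coordinates exchanged. Hence every vertex lying on one of the horizontal lines $\{k=p^r-1\}$ or the vertical lines $\{n=p^r-1\}$, $r\ge 1$, has $D$ prime to $p$. It remains to observe that an arbitrary infinite path $x=((n_i,k_i))$ from the root meets $\bigcup_{r\ge 1}(\{k=p^r-1\}\cup\{n=p^r-1\})$ for infinitely many $i$: the successive increments $(n_{i+1}-n_i,\,k_{i+1}-k_i)$ all lie in $\{(1,1),(1,0),(0,1)\}$, so each of the two coordinate sequences is nondecreasing with steps $0$ or $1$ while $n_i+k_i\ge i\to\infty$; therefore at least one of $n_i$, $k_i$ tends to $\infty$, and, increasing by single steps, that coordinate assumes every nonnegative integer value, in particular $p^r-1$ for every $r\ge 1$. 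Each such $i$ yields a regular vertex $(n_i,k_i)$ with $p\nmid D(n_i,k_i)$, contradicting the conclusion of the first paragraph. Hence $T$ has no root-of-unity eigenvalue other than $1$, which is total ergodicity.

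The genuinely hard part of the argument lies behind us: it is the divisibility statement of Theorem~\ref{thm:DelCong}, resting on Lucas' theorem and Lemmas~\ref{lem:PrimeCong}--\ref{lem:MoreCong}, together with the spectral reduction~\eqref{eq:evalue} borrowed from~\cite{Petersen2002}. In the present step the only point needing a little care is the design of the blocking configuration: one must use lines in \emph{both} coordinate directions, since a path that is eventually all-horizontal never revisits a line $k=p^r-1$ and, symmetrically, an eventually all-vertical path never revisits $n=p^r-1$; and one should note that a \emph{single} prime divisor $p$ of $n$ suffices, because the lines $\{n=p^r-1\}$ and $\{k=p^r-1\}$ already exhaust arbitrarily large values of the relevant nondecreasing coordinate, so no asymptotics of $D(n,k)$ are needed.
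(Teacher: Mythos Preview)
Your proof is correct and follows exactly the paper's approach: reduce to the spectral criterion~\eqref{eq:evalue} via~\cite{Petersen2002}, and then use Theorem~\ref{thm:DelCong} (and the symmetry $D(n,k)=D(k,n)$) to exhibit the blocking set $\{(n,p^r-1)\}\cup\{(p^r-1,k)\}$. You have simply spelled out in detail what the paper compresses into two sentences, including the careful verification that every infinite path meets this set infinitely often and the disposal of the atomic case.
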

 \begin{proof}
 As in the proof of total ergodicity of the Pascal adic (see, for example, \cites{PetersenSchmidt1997, AdamsPetersen1998, Petersen2002}), it is enough to show that for each prime $p$, every infinite path in the diagram must pass through infinitely many vertices whose dimensions are not divisible by $p$. The vertices $\{ (n,p^r-1): n,r \geq 0\}\cup\{(p^r-1,k): k,r \geq 0\}$, form such a ``blocking set"---see Figure \ref{fig:DelBlocking}.
 \end{proof}

 \begin{figure}[!h]
  \scalebox{.60}{\includegraphics{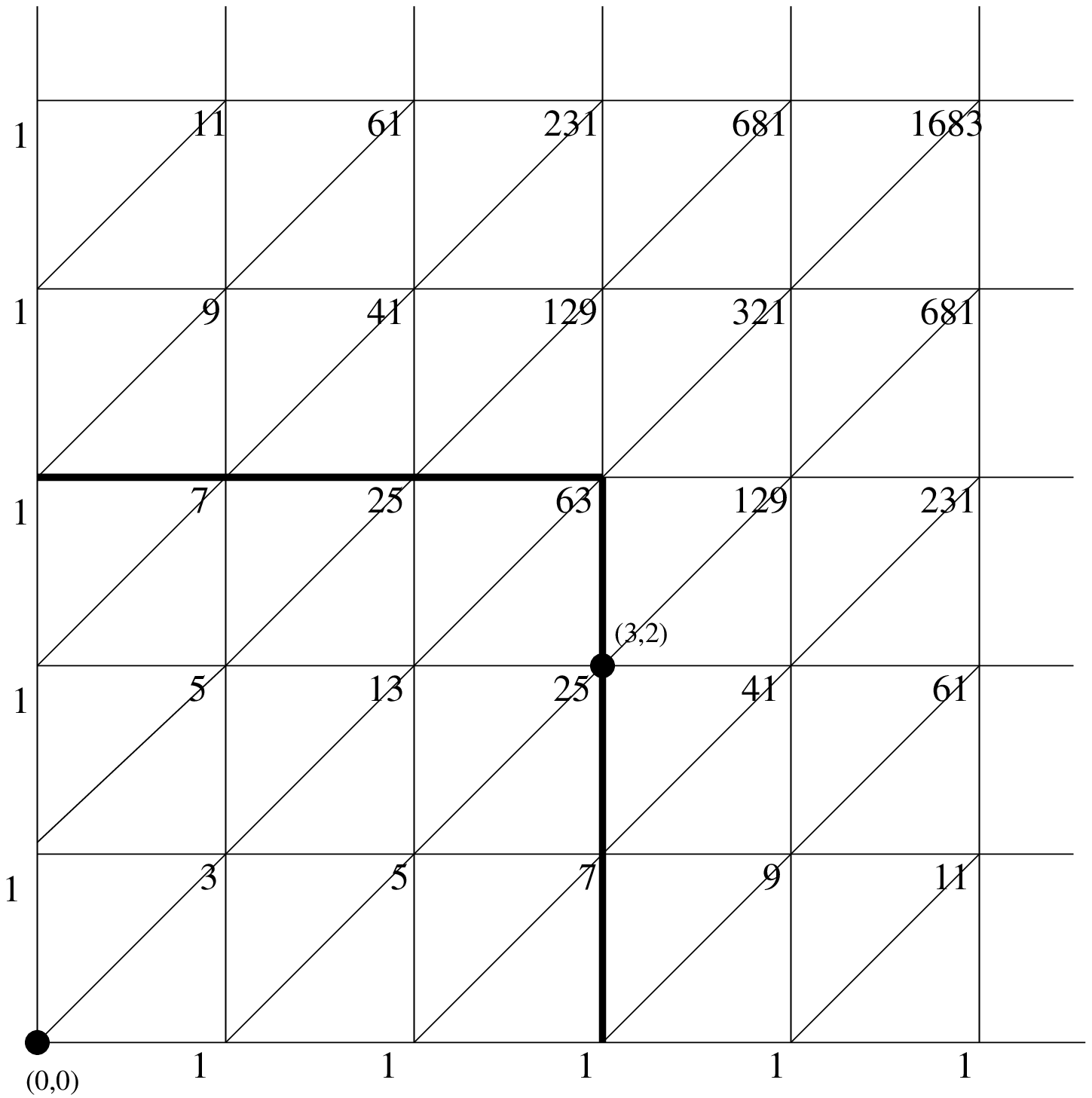}}
  \caption{The Delannoy graph with part of a ``blocking set''.}
  \label{fig:DelBlocking}
  \end{figure}

\section{Dimension groups}\label{sec:DimGps}

We want to compute the dimension group of the Delannoy Bratteli diagram shown in Figure \ref{fig:DelBrat}. The adjacency matrices $A_i$ between levels $i$ and $i+1$, for $i \geq 0$, determine the sequence of ordered abelian group homomorphisms

 \be
 \Bz \overset{A_1}\to \Bz^3 \overset{A_2}\to \Bz^5 \overset{A_3}\to \Bz^7 \overset{A_4}\to \dots .
 \en

Writing as usual transposes of the vectors and matrices involved for ease of typing,

\be
\begin{gathered}
A_0= (1 1 1), \\
A_1=\left(
\begin{matrix}
1 &1 &1 &0 &0 \\
0 &0 &1 &0 &0 \\
0 &0 &1 &1 &1\end{matrix}
\right),\\
A_2=\left(
\begin{matrix}
1 &1 &1 &0 &0 &0 &0 \\
0 &0 &1 &0 &0 &0 &0 \\
0 &0 &1 &1 &1 &0 &0 \\
0 &0 &0 &0 &1 &0 &0 \\
0 &0 &0 &0 &1 &1 &1
\end{matrix}
\right) , \dots , \\
A_{n+1}=\left(
\begin{matrix}
&&&& &0 &0 \\
&&&& &0 &0 \\
&&&&&&& \\
&&A_n &&&&& \\
&&&&&&& \\
0 &0 &\dots &0 &1 &0 &0 \\
0 &0 &\dots &0 &1 &1 &1
\end{matrix}
\right) .
\end{gathered}
\en

The {\em dimension group} $K_0(X,T)$ of the Delannoy adic dynamical system is the direct limit of this system of abelian groups and homomorphisms. It can be represented as the set of equivalence classes of the discrete union of $\{ \Bz^{2k+1} : k =0,1,2,\dots \}$, with $v \in \Bz^{2k+1}$ equivalent to $w \in \Bz^{2{k+j}+1}$ if $A_{k+j} \cdots A_{k+1} v = w$. It has a nonnegative cone consisting of those equivalence classes which contain a nonnegative vector, and a distinguished order unit,  the equivalence class of $ 1 \in \Bz$. Equivalence classes are added by adding representatives that have equal dimensions.

 We can also give a slightly more concrete representation of this dimension group as follows. We correspond to each $v \in \Bz^{2k+1}$ the following pair of polynomials with coefficients from $\Bz$:
 \be
 r(v)=\sum_{i=0}^{k-1} v_{2i} x^i, \qquad s(v)=\sum_{j=0}^k v_{2j+1} x^j .
 \en
 Then $A_{k+1}v$ corresponds to the pair of polynomials
 \be
 (r',s') = (s, xr(v) + (1+x) s(v) = (r,s)
 \left( \begin{matrix}
 0 & x\\
 1 & 1+x
 \end{matrix} \right) .
 \en
 The connecting maps $A_{k+1}$ can be realized by a single matrix
  \be
 B=\left( \begin{matrix}
 0 & x\\
 1 & 1+x
 \end{matrix} \right) ,
 \qquad \text { with inverse  }
 B^{-1} = -\frac{1}{x} \left( \begin{matrix}
 1+x & -x\\
 -1 & 0
 \end{matrix} \right).
 \en
  Thus the discrete union of $\{ \Bz^{2k+1} : k =0,1,2,\dots \}$ maps onto the set of pairs of polynomials $(r,s)$ with integer coefficients and $\deg (r) < \deg (s)$. Two pairs $(r,s)$ and $(u,v)$ are equivalent if there is $n \in \Bz$ such that $(u,v)=B^n (r,s)$. Each nonzero equivalence class contains a pair $(r,s)$ of smallest degree, which satisfies $r(0) \neq s(0)$. It can be found by starting with any member of the class and applying powers of $B^{-1}$ repeatedly. Equivalence classes can be added by adding members of equal degrees.
 Unfortunately the map that takes $v \in \cup \{ \Bz^{2k+1} : k =0,1,2,\dots \}$ to the pair of polynomials $(r(v), s(v))$ is many-to-one: for example, $(-1, 1, 0)$ and $(-1, 1, 0, 0, 0)$ have the same image. (The map could be made one-to-one by sending $v \in \Bz^{2k+1}$ to the triple $(r(v), s(v), k)$.) So the set of equivalence classes of pairs of integer-coefficient polynomials is  a quotient of the dimension group of the Delannoy adic. The distinguished order unit is the class of the image of $1 \in \Bz$, namely the class of the pair of polynomials $r(x)=1, s(x)=x+1$. This class consists of
  the ``Delannoy polynomials'' $P_n(x)$ defined by $P_0(x)=1,P_1(x)=x+1, P_{n+1}(x)=(x+1)P_n(x)+xP_{n-1}(x)$, whose coefficients form the array of Delannoy numbers.

\section{Remarks}
 (1) Recall that $A=\{ h,d,v \}$ denotes the set of edge labels for paths in the Delannoy graph. Denote by $\sigma : A^{\Bz} \to A^{\Bz}$ the usual shift map ($\sigma \omega)_i=(\omega)_{i+1}, i \in \Bz$). For the Delannoy system there is a countable set of paths $X' \subset X$ such that the map $\phi : X \setminus X' \to A^{\Bn}$ defined by $(\phi (x))_i = e_1(T^i x)$ is one-to-one (and Borel measurable) and satisfies $\phi \circ T = \sigma \circ \phi$. Thus for each of its invariant measures $\mu$, the Delannoy system is isomorphic to a genuine topological dynamical system (a compact metric space together with a homeomorphism) with a shift-invariant Borel probability measure, namely the subshift $\Sigma$ which is the closure of the image of $\phi$ together with $\sigma$ and the image of $\mu$ under $\phi$. The proof is the same as in \cite{MelaPetersen2005}.

 (2) The just-mentioned subshift $(\Sigma, \sigma)$ is topologically weakly mixing. The proof is the same as in \cite{MelaPetersen2005}.

 (3) With each of its ergodic invariant measures, the Delannoy adic dynamical system is loosely Bernoulli. The proof is the same as in \cite{delaRueJanvresse2004} (see also \cite{Frick2009}).

 (4) It may be possible to determine for $(\Sigma,\sigma)$ limit laws for return times to (or hitting times of) cylinder sets and the complexity of the associated formal language, as in \cite{MelaPetersen2005}. We do not know, for non-atomic ergodic measures, about weak mixing, multiplicity of the spectrum, or joinings of any $\mu_\alpha$ with any $\mu_\beta$.

(5) The Delannoy adic is another example, after the Pascal, of a system that is {\em isotropic} in the sense that there is the same pattern of edges leaving each vertex. It seems that section (3) of the proof of Theorem \ref{thm:DelInvMeas} (X. M\'{e}la's isotropy argument) will extend to all such systems to identify their ergodic measures. These systems will be the subject of future work.

\begin{bibdiv}
\begin{biblist}
\bibselect{Delannoy}
\end{biblist}
\end{bibdiv}

\end{document}